\newtheorem{theorem}{Theorem}[section]
\newtheorem{proposition}[theorem]{Proposition}
\newtheorem{question}[theorem]{Question}
\theoremstyle{definition}
\newtheorem{definition}[theorem]{Definition}
\newtheorem{remark}[theorem]{Remark}
\newtheorem{example}[theorem]{Example}
\begin{document}
\title[A note on complementary knowledge spaces]
{A note on complementary knowledge spaces}

\author{Fucai Lin}
\address{(Fucai Lin): School of mathematics and statistics,
 Minnan Normal University, Zhangzhou 363000, P. R. China}
\email{linfucai2008@aliyun.com; linfucai@mnnu.edu.cn}

\thanks{This work is supported by the Key Program of the Natural Science Foundation of Fujian Province (No: 2020J02043).}

  \keywords{knowledge structure, knowledge space, complementary knowledge space, discrete knowledge space}
  \subjclass[2000]{}

  \begin{abstract}
The pair $(Q, \mathscr{K})$ is a {\it knowledge space} if $\bigcup\mathscr{K}=Q$ and $\mathscr{K}$ is closed under union, where $Q$ is a nonempty set and $\mathscr{K}$ is a family of subsets of $Q$. A knowledge space $(Q, \mathscr{K})$ is called {\it complementary} if there exists a non-discrete knowledge space $(Q, \mathscr{L})$ such that the following (i) and (ii) satisfy:

\smallskip
(i) for any $q\in Q$, there are finitely many $K_{1}, \cdots, K_{n}\in \mathscr{K}$ and $L_{1}, \cdots, L_{m}\in \mathscr{L}$ such that $$(\bigcap_{i=1}^{n}K_{i})\cap (\bigcap_{j=1}^{m}L_{j})=\{q\};$$

\smallskip
(ii) $\mathscr{K}\cap \mathscr{L}=\{\emptyset, Q\}$.

In this paper, the existence of a complementary knowledge space for each knowledge space is proved, and a method of the construction of complementary finite knowledge spaces is given.
  \end{abstract}

 \maketitle
\section{Introduction and Preliminaries}
In this paper, we always say that $Q$ is a field of {\it knowledge} is a non-empty set of questions or items, and a subset of $Q$ is said to be a {\it knowledge state} if an individual can master it under ideal conditions. Moreover, if a collection $\mathscr{H}$ of knowledge state satisfies $\{\emptyset, Q\}\subseteq\mathscr{H}$, then $\mathscr{H}$vis called {\it knowledge structure}, which is denoted by $(Q, \mathscr{H})$.
If the field can be omitted without ambiguity, sometimes we simply say that $\mathscr{H}$ is the knowledge structure.
Further, we say that a knowledge structure $\mathscr{H}$ is called a {\it knowledge
space} \cite{doignon2011knowledge}, which is an important type of knowledge structures.

Indeed, the theory of knowledge spaces (KST) was introduced by Doignon and Falmagne in 1999. Now it is a mathematical framework for the assessment of knowledge and advices for further learning, see \cite{doignon1985spaces,falmagne2011learning}. Based on individuals' responses to items, KST makes a dynamic evaluation process  \cite{doignon1985spaces}. In \cite{2009Danilov}, Danilov said that the concept of a knowledge space is a generalization of the concept of topological space. Then the authors in \cite{LWCL} systematically discussed the language of knowledge spaces in
pre-topology.

The knowledge structure in the domain of items is always obtained either by experts to determine the difficult and easy relationship between items, or by inference system. Different experts may obtain different knowledge structures in the same domain of items. If two knowledge structures in the same domain of items intersect as $\{\emptyset, Q\}$, then one explanation is that these two types of experts have a big difference in the understanding of the difficulty of items. For example, teachers can be considered as experts, and students can obtain knowledge structure by different teachers according to different teaching strategies. On the other hand, if we can combine different two types of experts of the understanding of the difficulty of items, then we may find that for each item $q$ there exist finitely many states of this two types of experts such that $q$ is the unique common item which we must master in order to reach these finitely many states. Therefore, we study the concept of complementary of two knowledge spaces and give a method to constructing a complementary knowledge space for each finite knowledge space.

Assume $\mathscr{F}\subseteq 2^{Q}$; then put $\mathscr{F}_{t}=\{K\in\mathscr{F}: t\in Q\}$ and $$t^{\ast}=\{r| \mathscr{H}_{r}=\mathscr{H}_{t}, r\in Q\}$$for each $t\in Q$. Then each $t^{\ast}$ is said to be a {\it notion} \cite{doignon2011knowledge}. Clearly, we have $$t^{\ast}=\{r\in Q| r^{\ast}=t^{\ast}\}.$$ Moreover, we say that $(Q, \mathscr{H})$ is {\it discriminative} if $t^{\ast}$ is single item for each $t\in Q$.

In \cite{XGLJ}, X. Ge and J. Li introduced the concept of $T_{0}$-knowledge structure and $T_{1}$-knowledge structure, which respectively satisfy $\mathscr{K}_{p}\neq \mathscr{K}_{q}$, and $\mathscr{K}_{p}\nsubseteq \mathscr{K}_{q}$, $\mathscr{K}_{q}\nsubseteq\mathscr{K}_{p}$ for any distinct items $p$ and $q$ in $Q$. Clearly, a knowledge structure is a discriminative knowledge structure iff it is a $T_{0}$-knowledge structure.

\begin{definition}
Assume $(Q, \mathscr{K})$ is a knowledge space. We say that

\smallskip
(1) $(Q, \mathscr{K})$ is {\it discrete} if, for each $q\in Q$, there exist finitely many $K_{1}, \cdots, K_{n}\in \mathscr{K}$ such that $\{q\}=\bigcap_{i=1}^{n}K_{i}$, and $(Q, \mathscr{K})$ is non-trivial whenever it is non-discrete and $\mathscr{K}\neq\{\emptyset, Q\}$;

\smallskip
(2) $(Q, \mathscr{K})$ is {\it complementary} if there exists a non-discrete knowledge space $(Q, \mathscr{L})$ such that the following two conditions (i) and (ii) satisfy:

\smallskip
(i) for each $q\in Q$, there are finitely many $K_{1}, \cdots, K_{n}\in \mathscr{K}$ and $L_{1}, \cdots, L_{m}\in \mathscr{L}$ so that $$(\bigcap_{i=1}^{n}K_{i})\cap (\bigcap_{j=1}^{m}L_{j})=\{q\};$$

\smallskip
(ii) $\mathscr{L}\cap \mathscr{K}=\{\emptyset, Q\}$.

\smallskip
Moreover, we say that $(Q, \mathscr{K})$ is {\it discriminative complementary} if $(Q, \mathscr{L})$ in (2) is also discriminative.
\end{definition}

\begin{remark}
Clearly, each discrete knowledge space is discriminative, but that the converse does not hold in general. Indeed, let $(Q, \mathscr{K})$ be a knowledge space, where $\mathscr{K}=\{\emptyset, Q, \{b, c\}, \{a, b\}\}$ on $Q=\{a, b, c\}$. Then $(Q, \mathscr{K})$ is discriminative; however, $(Q, \mathscr{K})$ is not discrete since $\bigcap\{K\in\mathscr{K}: c\in K\}=\{b, c\}$.
\end{remark}

\begin{definition}
A collection $(Q, \mathscr{K})$ is a {\it closure space} if $Q\in \mathscr{K}$ and $\bigcap\mathscr{K}^{\prime}\in\mathscr{K}$ for any non-empty subfamily $\mathscr{K}^{\prime}$ of $\mathscr{K}$.
\end{definition}

We denote $\mathcal{K}$ by the family of all knowledge spaces on $Q$. Then $\mathcal{K}$, ordered by inclusion, is a lattice (with infimum equal to intersection, and supremum obtained by taking all unions of a state in one knowledge space and a state in the other knowledge space). Clearly, the lattice $\mathcal{K}$ is canonically isomorphic to the lattice of closure spaces on $Q$.

\begin{definition}
Assume that $(Q, \mathscr{K})$ is a knowledge structure, and assume that $O$ is a
nonempty proper subset of $Q$. Then we say that the following family
$$\mathscr{K}_{|O}=\{K\cap O: K\in\mathscr{K}\}$$
is a {\it projection} of $\mathscr{K}$ on $O$.
\end{definition}

 \maketitle
\section{main results}
In this section, the existence of a complementary knowledge space for each knowledge space is proved in Theorem~\ref{ttttt}, and a method to constructing a complementary knowledge space for each finite knowledge space is given.

\begin{theorem}\label{ttttt}
Each non-trivial knowledge space $(Q, \mathscr{K})$ is complementary.
\end{theorem}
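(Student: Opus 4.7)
The plan is to build an explicit witness $(Q,\mathscr{L})$ to the complementarity of $(Q,\mathscr{K})$. Non-triviality supplies two pieces of data I would exploit: (a) a state $K^{*}\in \mathscr{K}$ with $\emptyset\subsetneq K^{*}\subsetneq Q$, and (b) a point $q_{0}\in Q$ such that no finite intersection of states of $\mathscr{K}$ containing $q_{0}$ equals $\{q_{0}\}$.

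For each $q\in Q$ I would fix a finite intersection $I_{q}$ of states of $\mathscr{K}$ containing $q$ and set $L_{q}=(Q\setminus I_{q})\cup\{q\}$; by construction $L_{q}\ni q$ and $L_{q}\cap I_{q}=\{q\}$. I would then take $\mathscr{L}$ to be the closure of $\{L_{q}:q\in Q\}\cup\{\emptyset\}$ under arbitrary unions. Because each $L_{q}$ contains $q$, the union of $\mathscr{L}$ equals $Q$, so $(Q,\mathscr{L})$ is automatically a knowledge space. Condition (i) of the complementarity definition is then immediate from $L_{q}\cap I_{q}=\{q\}$. For condition (ii), I would choose the $I_{q}$'s so that each $L_{q}\notin \mathscr{K}$ (using $K^{*}$ to perturb when necessary) and argue that no nontrivial union of $L_{q}$'s accidentally lies in $\mathscr{K}$.

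The principal obstacle will be ensuring $\mathscr{L}$ is non-discrete. A priori the $L_{q}$'s could be so separating that $\mathscr{L}$ becomes discrete everywhere --- this indeed happens for the naive choice $I_{q}=\bigcap\{K\in\mathscr{K}:q\in K\}$ on small examples. The remedy is to designate $q_{0}$ as the target non-discrete point of $\mathscr{L}$ and tune the $L_{q}$'s so that any finite intersection $\bigcap_{j}L_{q_{j}}$ containing $q_{0}$ must retain some point other than $q_{0}$. From the form $L_{q}=(Q\setminus I_{q})\cup\{q\}$, membership $q_{0}\in L_{q_{j}}$ forces either $q_{j}=q_{0}$ or $q_{0}\notin I_{q_{j}}$, and an isolation identity $\bigcap_{j}L_{q_{j}}=\{q_{0}\}$ can then be translated back into a finite family of states of $\mathscr{K}$ whose intersection is $\{q_{0}\}$, contradicting non-discreteness of $\mathscr{K}$ at $q_{0}$. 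The technical heart of the argument will be making this translation uniform while preserving (i) and (ii), and I expect this will require either a more careful choice of the $L_{q}$'s (shrinking or growing them with the help of $K^{*}$) or a case split according to whether individual points of $Q$ are locally discrete for $\mathscr{K}$.
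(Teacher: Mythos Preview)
Your approach is genuinely different from the paper's. The paper does not build $\mathscr{L}$ by hand at all: it observes that the topology $\tau$ on $Q$ generated by $\mathscr{K}$ as a subbase is non-discrete exactly when $(Q,\mathscr{K})$ is non-discrete in the knowledge-space sense, and then invokes Steiner's theorem \cite{SA1966} that the lattice of topologies on any set is complemented. The complementary topology $\sigma$ is automatically a knowledge space, and the lattice conditions $\tau\wedge\sigma=\{\emptyset,Q\}$, $\tau\vee\sigma=2^{Q}$ unwind to precisely (ii) and (i). So the paper's proof is a two-line black-box reduction.

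Your direct construction has a real gap at the ``translation'' step for non-discreteness. From $\bigcap_{j}L_{q_{j}}=\{q_{0}\}$ and $L_{q_{j}}=(Q\setminus I_{q_{j}})\cup\{q_{j}\}$ you only learn, for each $p\neq q_{0}$, that $p\in I_{q_{j(p)}}$ for some $j(p)$; this is a covering statement about the $I_{q_{j}}$'s, not an intersection statement, and it produces no states of $\mathscr{K}$ \emph{containing} $q_{0}$ whose intersection is $\{q_{0}\}$. Indeed, for every $j$ with $q_{j}\neq q_{0}$ you have $q_{0}\notin I_{q_{j}}$, so those $I_{q_{j}}$'s are useless for isolating $q_{0}$ in $\mathscr{K}$, and the single $I_{q_{0}}$ (if present) was chosen precisely so that it does not isolate $q_{0}$. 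The argument for (ii) is likewise missing: even after arranging each individual $L_{q}\notin\mathscr{K}$, an arbitrary union has the form $\bigcup_{q\in S}L_{q}=(Q\setminus\bigcap_{q\in S}I_{q})\cup S$, and nothing in your setup prevents such a set from landing in $\mathscr{K}$. For finite $Q$ a careful inductive construction along these lines can be made to work (the paper in fact carries out an explicit finite construction later, of a rather different flavour), but for infinite $Q$ controlling all unions and all finite intersections simultaneously is exactly the hard content that Steiner's theorem packages.
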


\begin{proof}
Suppose $(Q, \mathscr{K})$ is a non-trivial knowledge space. Let $\tau$ be the topology on $Q$ which is generated by $\mathscr{K}$ as a subbase. Then $\tau$ is not discrete since $\tau$ is non-trivial. By \cite{SA1966}, there is a topology $\sigma$ on $Q$ such that $\sigma$ is a complementary with $\tau$. Clearly, $(Q, \sigma)$ is a knowledge space. The proof is completed.
\end{proof}

Of course, there exists no discriminative complementary knowledge space, see the following example.

\begin{example}
There exists a finite discriminative knowledge space $(Q, \mathscr{K})$ which has no discriminative complementary knowledge space.
\end{example}

\begin{proof}
Let $Q=\{a, b, c\}$ and $\mathscr{K}=\{\emptyset, Q, \{b, c\}, \{a, c\}, \{a, b\}\}$. Clearly, $(Q, \mathscr{K})$ is a discriminative knowledge space. However, any non-trivial knowledge space on $Q$ is not discriminative complementary with $(Q, \mathscr{K})$. If not, let $(Q, \mathscr{L})$ is discriminative complementary with $(Q, \mathscr{K})$. Then $|L|\neq 2$ for any $L\in \mathscr{L}$. Since $(Q, \mathscr{L})$ is non-trivial, it follows that at most one of $\{a\}\in \mathscr{L}$, $\{b\}\in \mathscr{L}$ and $\{c\}\in \mathscr{L}$ holds. Without loss of generality, we may assume that $\{a\}\in \mathscr{L}$, then $\mathscr{L}=\{\emptyset, \{a\}, Q\}$, which is not discriminative, a contradiction.
\end{proof}

\begin{proposition}
A finite knowledge space $(Q, \mathscr{K})$ is discrete iff $\mathscr{K}$ is a $T_{1}$-knowledge structure.
\end{proposition}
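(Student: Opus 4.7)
The plan is to unwind the two definitions and show that each direction follows almost directly, with the finiteness of $Q$ entering only in the ``$T_1 \Rightarrow$ discrete'' implication. Recall that $\mathscr{K}_q = \{K\in\mathscr{K}: q\in K\}$, so the $T_1$ condition on distinct items $p,q\in Q$ is equivalent to the existence of states $K\in\mathscr{K}$ with $q\in K$ and $p\notin K$, and symmetrically.

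For the forward direction, I would fix $q\in Q$ and use the discreteness assumption to pick $K_1,\dots,K_n\in\mathscr{K}$ with $\bigcap_{i=1}^{n}K_i=\{q\}$. Then for any $p\neq q$, since $p\notin\{q\}$, some $K_i$ must fail to contain $p$; this $K_i$ lies in $\mathscr{K}_q\setminus\mathscr{K}_p$, witnessing $\mathscr{K}_q\nsubseteq\mathscr{K}_p$. Swapping the roles of $p$ and $q$ and repeating the argument (with the states provided by applying discreteness at $p$) gives $\mathscr{K}_p\nsubseteq\mathscr{K}_q$. Hence $\mathscr{K}$ is a $T_1$-knowledge structure.

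For the converse, I would fix $q\in Q$ and, for every $p\in Q\setminus\{q\}$, use the $T_1$ hypothesis to choose $K_p\in\mathscr{K}$ with $q\in K_p$ and $p\notin K_p$. Because $Q$ is finite, the family $\{K_p:p\in Q\setminus\{q\}\}$ is finite, so the intersection $\bigcap_{p\neq q}K_p$ is a finite intersection of states. By construction it contains $q$ but excludes every $p\neq q$, hence equals $\{q\}$. This exhibits $q$ as the required finite intersection, so $(Q,\mathscr{K})$ is discrete.

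The only subtlety — and the single place where the hypothesis that $Q$ is finite is used — is in the converse, to guarantee that the witnessing family $\{K_p\}$ is finite; without this, the $T_1$ property would only yield that $\{q\}$ is an intersection of (possibly infinitely many) states, which does not meet the definition of discreteness. Otherwise the argument is just a direct translation between the separation-type formulation ($T_1$) and the intersection-of-states formulation (discreteness), and no further machinery is needed.
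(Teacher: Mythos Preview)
Your proof is correct and follows essentially the same approach as the paper: both directions proceed by unwinding the definitions, using the discreteness witnesses $K_1,\dots,K_n$ to separate $q$ from each $p\neq q$ in the forward direction, and intersecting the finitely many $T_1$-separating states in the converse, with finiteness of $Q$ used only there. Your write-up is in fact slightly more explicit than the paper's in noting that the forward direction requires applying discreteness at both $q$ and $p$ to obtain both non-inclusions $\mathscr{K}_q\nsubseteq\mathscr{K}_p$ and $\mathscr{K}_p\nsubseteq\mathscr{K}_q$.
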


\begin{proof}
Suppose that $(Q, \mathscr{K})$ is discrete. Fix any $q\in Q$. Since $(Q, \mathscr{K})$ is discrete, there exist $K_{1}, \cdots, K_{n}\in \mathscr{K}$ such that $\{q\}=\bigcap_{i=1}^{n}K_{i}$. For any $p\in Q\setminus\{q\}$, since $\{q\}=\bigcap_{i=1}^{n}K_{i}$, it follows that $\{q\}=\bigcap\mathscr{K}_{q}$, hence there exists a $K\in\mathscr{K}$ such that $q\in K$ and $p\not\in K$. Therefore, $\mathscr{K}$ is a $T_{1}$-knowledge structure.

Assume that $\mathscr{K}$ is a $T_{1}$-knowledge structure. Fix any $q\in Q$. For any $p\in Q\setminus\{q\}$, since $\mathscr{K}$ is a $T_{1}$-knowledge structure, there exists $K\in\mathscr{K}$ such that $q\in K$ and $p\not\in K$. Hence $\{q\}=\bigcap\mathscr{K}_{q}$. Since $Q$ is finite, $(Q, \mathscr{K})$ is discrete.
\end{proof}

\begin{example}
There exists an infinite $T_{1}$-knowledge space $(Q, \mathscr{K})$; however, it is not discrete.
\end{example}

\begin{proof}
Indeed, let $Q=\mathbb{R}$ and $\mathscr{K}=\{K: |Q\setminus K|\leq\omega, K\subset Q\}\cup\{\emptyset\}$. Then $(Q, \mathscr{K})$ is a $T_{1}$-knowledge space. However, for any finitely many $K_{1}, \cdots, K_{n}\in \mathscr{K}$, the intersection $\bigcap_{i=1}^{n}K_{i}$ is an uncountable set.
\end{proof}

\begin{question}
Assume that $(Q, \mathscr{K})$ is a non-trivial finite knowledge space. How many non-trivial knowledge spaces which are complementary with $\mathscr{K}$?
\end{question}

Suppose $(Q, \mathscr{K})$ is a non-trivial finite knowledge space in the rest of this paper. Then we give a method to construct a complementary knowledge space $(Q, \mathscr{L})$ for $(Q, \mathscr{K})$.

By \cite[Theorem 48]{LWCL} and a sketch of algorithm in \cite{LWCL}, we can take an arbitrary subset $D_{0}=\{q_{1}, \cdots, q_{m}\}$ of $(Q, \mathscr{K})$ such that the following (1)-(3) hold:

\smallskip
(1) If $\bigcap(\mathscr{K}\setminus\{\emptyset\})\neq\emptyset$, then $m=1$ and $D_{0}\subset\bigcap(\mathscr{K}\setminus\{\emptyset\})$; otherwise, for any $1< n\leq m$, there is $K\in\mathscr{K}$ such that $q_{n}\in K$ and $K\cap\{q_{1}, \cdots, q_{n-1}\}=\emptyset$.

\smallskip
(2) $K\cap D_{0}\neq\emptyset$ for any $K\in \mathscr{K}\setminus\{\emptyset\}$.

\smallskip
(3) If $m\neq1$, then there is an $i\leq n$ such that $\{q_{i}\}\not\in \mathscr{K}_{|D_{0}}$ (this is possible since $(Q, \mathscr{K})$ is non-trivial).

We consider $D_{0}$ together with the natural well-ordering, $q_{i}<q_{j}$ iff $i<j$. For the convenience, we add a new point $\infty\in D_{0}$ to the set $D_{0}$ and conclude that $q_{i}<\infty$ for each $i\leq m$. By our construction, we have $[d, \infty)=\{x\in D_{0}: d\leq x\}\in \mathscr{K}_{|D_{0}}$ for each $d\in D_{0}$. For each $i\leq m-1$, put $$q_{i}^{\prime}=\min\{a\in D_{0}\cup\{\infty\}: j>i,  [q_{i}, a)\in \mathscr{K}_{|D_{0}}\}.$$ Clearly we have $\{q_{m}\}\in \mathscr{K}_{|D_{0}}$. Then there exists a minimal natural number $n<m$ such that $\{[a_{i}, a_{i+1}): i\leq n\}\cup\{q_{m}\}$ cover the set $D_{0}$, where $a_{1}=q_{1}$, $a_{i+1}=a_{i}^{\prime}$ for each $i\leq n$ and it can happen that $a_{n+1}=\infty$. Let $$\mathscr{O}=\{(\bigcup_{i\leq n}[a_{i}, f(i)))\cup\{q_{m}\}: f\in (D_{0})^{n}, a_{i}<f(i)\leq a_{i+1}\ \mbox{for each}\ i\leq n\}.$$ By (3), we have $|\mathscr{O}|>1$. Take any subset $\mathscr{O}^{\prime}$ of $\mathscr{O}$ such that $\mathscr{O}^{\prime}$ is a base for some knowledge space on $D_{0}$ and for any $O\in\mathscr{O}$ there exist finitely many $O_{1}, \cdots, O_{p}\in \mathscr{O}^{\prime}$ with $O=\bigcap_{i=1}^{p}O_{i}$, and denote this knowledge space by $(D_{0}, \mathscr{D}_{0})$. We claim that $(D_{0}, \mathscr{D}_{0})$ is complementary with $\mathscr{K}_{|D_{0}}$.

Indeed, since all the sets $[a_{i}, a_{i+1})$ belong to $\mathscr{K}_{|D_{0}}$ and none of the sets $[a_{i}, c))$ with $a_{i}<c<a_{i+1}$ belongs to $\mathscr{K}_{|D_{0}}$, it follows that $\mathscr{K}_{|D_{0}}\cap \mathscr{D}_{0}=\{\emptyset, D_{0}\}$. In order to prove the knowledge space generated by $\mathscr{K}_{|D_{0}}\cup \mathscr{D}_{0}$ is the discrete knowledge space, we fix an arbitrary point $d\in D_{0}$. If $d=d_{m}$, then $\{d\}=\{d_{m}\}\in \mathscr{K}_{|D_{0}}$, hence the proof is completed. Otherwise there exists exactly one $j\leq n$ such that $d\in [a_{j}, a_{j+1})$. If $d$ is maximal in $[a_{j}, a_{j+1})$, then $\{d\}=[a_{j}, a_{j+1})\cap [d, d_{m})$; since $[d, d_{m})\in \mathscr{K}_{|D_{0}}$, the proof is completed. Assume that $d$ is not maximal in $[a_{j}, a_{j+1})$. Set $$b=\min\{a\in [a_{j}, a_{j+1}): c<a\}.$$ Then there exists $O\in \mathscr{O}$ such that $O\cap [a_{j}, a_{j+1})=[a_{j}, b)$, hence there are finitely many $O_{1}, \cdots, O_{p}\in \mathscr{O}^{\prime}$ such that $O=\bigcap_{i=1}^{p}O_{i}$. Moreover, $[d, d_{m})$ and $[a_{j}, a_{j+1})$ belong to $\mathscr{K}_{|D_{0}}$. Therefore, $$[d, d_{m})\cap [a_{j}, a_{j+1})\cap O_{1}\cap\cdots\cap O_{p}=[d, a_{j+1})\cap [a_{j}, b)=\{d\},$$which completes the proof of the claim.

Since $Q$ is finite, we can define a finitely many subsets $\{D_{i}: 0\leq i\leq n_{0}\}$ of $Q$ with respective knowledge spaces $\mathscr{D}_{i}$ such that the following (a)-(c) hold.

\smallskip
(a) Each $(D_{i}, \mathscr{D}_{i})$ is complementary with $\mathscr{K}_{|D_{i}}$;

\smallskip
(b) For each $i\leq n_{0}$, we have $K\cap D_{i}\neq\emptyset$ for each $K\in \mathscr{K}_{|Q\setminus (\bigcup_{j<i}D_{j})}$, where $D_{-1}=\emptyset$;

\smallskip
(c) $Q=\bigcup_{i=0}^{n_{0}}D_{i}$.

\smallskip
Let $m_{0}\leq n_{0}$ be the minimal natural number such that $\bigcup_{i=0}^{m_{0}}D_{i}\in\mathscr{K}$. Then put $$X=Q\setminus\bigcup_{i=0}^{m_{0}}D_{i}$$ and $$\mathscr{B}=\{W\cup X:W\in\mathscr{D}_{0}\}\cup\bigcup_{i=1}^{n_{0}}\mathscr{D}_{i}.$$ Let $\mathscr{L}$ be the knowledge space which is generated by the family $\mathscr{B}$. Clearly, we have $\mathscr{D}_{i}\subset \mathscr{L}$ for each $0<i\leq n_{0}$. Next we prove the following two claims:

\smallskip
{\bf Claim 1:} $\mathscr{K}\cap\mathscr{L}=\{\emptyset, Q\}$.

\smallskip
Let $K\in \mathscr{K}\cap\mathscr{L}$ with $K\neq\emptyset$. Since $K\in \mathscr{K}$, it follows that $K\cap D_{0}\neq\emptyset$, then $D_{0}\subset K$ because $K\cap D_{0}\in \mathscr{K}_{|D_{0}}$ and $\mathscr{D}_{0}\cap\mathscr{K}_{|D_{0}}=\{\emptyset, D_{0}\}$. Similarly, we can prove that $D_{i}\subset K$ for each $i\leq n_{0}$. Therefore, $K=Q$.

\smallskip
{\bf Claim 2:} For each $q\in Q$, there exist $K_{1}, \cdots, K_{r}\in \mathscr{K}$ and $L_{1}, \cdots, L_{t}\in \mathscr{L}$ such that $$(\bigcap_{i=1}^{r}K_{i})\cap (\bigcap_{j=1}^{t}L_{j})=\{q\}.$$

\smallskip
Clearly, there exists exactly one $i_{0}\leq n_{0}$ such that $q\in D_{i_{0}}$. If $i_{0}=0$, then there exist $K_{1}^{\ast}, \cdots, K_{r}^{\ast}\in \mathscr{K}_{|D_{0}}$ and $L_{1}^{\ast}, \cdots, L_{t}^{\ast}\in \mathscr{D}_{0}$ such that $(\bigcap_{i=1}^{r}K_{i}^{\ast})\cap (\bigcap_{j=1}^{t}L_{j}^{\ast})=\{q\}$. For each $i\leq r$, there exists $K_{i}\in\mathscr{K}$ such that $K_{i}^{\ast}=K_{i}\cap D_{i_{0}}$. Then it is easily checked that $$(Q\setminus X)\cap(\bigcap_{i=1}^{r}K_{i})\cap (\bigcap_{j=1}^{t}(L_{j}^{\ast}\cup X))=\{q\}.$$ The proof is completed since $Q\setminus X, K_{i}\in \mathscr{K}$ for each $i\leq r$ and $L_{j}^{\ast}\cup X\in \mathscr{L}$ for each $i\leq t$..

Assume that $i_{0}>0$. Since $\mathscr{K}_{|D_{i_{0}}}$ is complementary with $\mathscr{D}_{i_{0}}$, there exist $K_{1}^{\prime}, \cdots, K_{r}^{\prime}\in \mathscr{K}_{|D_{i_{0}}}$ and $L_{1}, \cdots, L_{t}\in \mathscr{D}_{i_{0}}$ such that $(\bigcap_{i=1}^{r}K_{i}^{\prime})\cap (\bigcap_{j=1}^{t}L_{j})=\{q\}$. For any $i\leq r$, there is $K_{i}\in\mathscr{K}$ with $K_{i}^{\prime}=K_{i}\cap D_{i_{0}}$. Obviously, we have $$(\bigcap_{i=1}^{r}K_{i})\cap (\bigcap_{j=1}^{t}L_{j})=\{q\}.$$ Since $\mathscr{D}_{i_{0}}\subset \mathscr{L}$, it follows that $(Q,  \mathscr{L})$ is a complementary knowledge space with $(Q, \mathscr{K})$.

Now we give an example to show our method.

\begin{example}
Let $Q=\{a, b, c, d, e, f, g\}$ and $\mathscr{K}=\{\emptyset, Q, \{a, b, c, d, e, f\}, \{b, c, d, e, f\}, \{a, b, c, e, f\}, \\ \{a, b, c, d, f\},
\{a, b, c, d, e\}, \{a, b, d, f\}, \{b, c, e, f\}, \{b, c, d, f\}, \{b, c, d, e\}, \{a, b, c, e\}, \{a, b, c, d\}, \{b, d, f\}, \\ \{b, c, e\}, \{b, c, d\},
\{a, b, d\}, \{a, b, c\}, \{b, d\}, \{b, c\}, \{a, c\}, \{a, b\}, \{b\}, \{c\}\}$.

\smallskip
Clearly, $(Q, \mathscr{K})$ is a $T_{0}$-knowledge space. Let $D_{0}=\{q_{1}=e, q_{2}=b, q_{3}=c\}$. Then $D_{0}$ satisfies the conditions (1)-(3) above. It is easily checked that we have a knowledge space $(D_{0}, \mathscr{D}_{0})$, where $\mathscr{D}_{0}=\{\emptyset, \{e\}, \{b, e\}, \{b, c, e\}\}$. Then it is easy to check that $D_{1}=\{a, d, f, g\}$ with $$g_{1}=g<g_{2}=a<g_{3}=d< g_{4}=f$$ and a knowledge structure $\mathscr{D}_{1}=\{\emptyset, D_{1}, \{g\}, \{g, a\}, \{g, a, d\}\}$. Next let $$\mathscr{B}=\{Q, \{e, a, d, f, g\}, \{a, d, f, g\}, \{a, d, f, g, b, e\}, \{g, a, d\}, \{g, a\}, \{g\}\}.$$ It is easy to verify that the knowledge space $(Q, \mathscr{L})$ generated by $\mathscr{B}$ is complementary with $(Q, \mathscr{K})$.
\end{example}

\smallskip
{\bf Declarations}

\smallskip
{\bf Conflicts of Interest/Competing Interests} On behalf of all authors, the corresponding author states that there is no conflict of interest.


\begin{thebibliography}{99}
\bibitem{2009Danilov} V.I. Danilov, Knowledge spaces from a topological point of view. Journal of Mathematical Psychology, 53(2009)510--517.

\bibitem{doignon1985spaces}
J.P. Doignon, J.C. Falmagne, Spaces for The Assessment
of Knowledge. International Journal of Man-Machine Studies, 23(2)(1985) 175-196.

\bibitem{doignon2011knowledge} J.P. Doignon, J.C. Falmagne. Learning Spaces. Springer-Verlag Berlin Heidelberg, 2011.

\bibitem{falmagne2011learning}
J.C. Falmagne, J.P. Doignon, Learning spaces: Interdisciplinary applied mathematics. Berlin, Heidelberg: Springer, 2011.

\bibitem{XGLJ} X. Ge, J. Li, A note on the separability of items in knowledge structures delineated
by skill multimaps. Journal of Mathematical Psychology, 98 (2020) 102427.

\bibitem{SA1966} A.K. Steiner, {\it The lattice of topologies: structure and complementation}, Trans. Am. Math. Soc. 122(1966) 379--398.

\bibitem{LWCL} F. Lin, T.Y. Wu, Y. Cao, J. Li, {\it The language of knowledge spaces in pre-topology}, J. Intell. Fuzzy Syst., DOI:10.3233/JIFS-230498.
  \end{thebibliography}
  \end{document}